\documentclass{amsart}
\usepackage{fullpage,amssymb,algorithm,algpseudocode,graphicx}
\usepackage{hyperref}
\newcommand{\N}{\mathbb{N}}
\newcommand{\Z}{\mathbb{Z}}
\newtheorem{theorem}{Theorem}
\newtheorem{lemma}[theorem]{Lemma}
\newtheorem{corollary}[theorem]{Corollary}

\numberwithin{theorem}{section}
\numberwithin{equation}{section}
\numberwithin{figure}{section}
\numberwithin{table}{section}
\numberwithin{algorithm}{section}
\DeclareMathOperator{\lcm}{lcm}
\begin{document}
\title{Finite connected components of the aliquot graph}
\author{Andrew R. Booker}
\address{School of Mathematics, University of Bristol,
University Walk, Bristol, BS8 1TW, United Kingdom}
\email{andrew.booker@bristol.ac.uk}
\thanks{The author was partially supported by EPSRC Grant
\texttt{EP/K034383/1}.}
\begin{abstract}
Conditional on a strong form of the Goldbach conjecture,
we determine all finite connected components of the aliquot
graph containing a number less than $10^9$, as well as those containing
an amicable pair below $10^{14}$ or one of the known perfect or
sociable cycles below $10^{17}$.
Along the way we develop a fast algorithm for computing the inverse
image of an even number under the sum-of-proper-divisors function.
\end{abstract}
\maketitle
\section{Introduction}
For $n\in\N$, let $s(n)=\sum_{\substack{d\mid n\\d\ne n}}d$
denote the sum of the proper divisors of $n$. Ancient Greek mathematicians
studied the forward orbits $n$, $s(n)$, $s(s(n))$, \ldots, now called
\emph{aliquot sequences}, and noted that they sometimes enter cycles,
such as $6$, $6$, \ldots\ and $220$, $284$, $220$, \ldots.  In the modern
computer era, more than a billion examples of such \emph{aliquot cycles}
have been found \cite{chernykh,moews}; most of these, like $\{220,284\}$,
have order $2$, and are termed \emph{amicable pairs}. A long-standing
conjecture posits that there are infinitely many aliquot cycles.

One can also ask about the inverse orbits $\{n\}\cup s^{-1}(\{n\})
\cup s^{-1}(s^{-1}(\{n\}))\cup\cdots$. Although questions concerning
the inverse image $s^{-1}(\{n\})$ of a given $n$ go back at least 1000
years \cite{sesiano}, the idea of iterating the inverse map appears
to have been considered only recently (see \cite[Theorem~5.3]{egps}
and \cite{delahaye}, for instance). In relation to this, Garambois
\cite{garambois} has conducted many numerical studies, focusing in
particular on \emph{isolated cycles}, i.e.\ cycles that are their own
inverse orbits. For instance, $s^{-1}(\{28\})=\{28\}$, so $\{28\}$
is an isolated cycle of order $1$.

In this paper, we seek to generalize this concept. To do so, following
Delahaye \cite{delahaye}, we introduce the \emph{aliquot graph}, which
packages all of the aliquot sequences together into a single directed
graph. Precisely, every natural number is a node of the graph, and
for any $m,n\in\N$, there is a directed edge from $m$ to $n$ if and
only if $n=s(m)$. As the examples noted above demonstrate, the aliquot
graph is not connected; in fact any two distinct aliquot cycles lie in
distinct connected components, so presumably the graph has infinitely
many components.  In these terms, we see that Garambois' isolated cycles
are examples of finite connected components.

Our objectives are (1) to find examples of finite connected components
beyond simple cycles, and (2) to determine a comprehensive list
of all finite connected components with at least one small node.
Toward the first objective, in Section~\ref{sec:alg} we present an
algorithm for computing the inverse image $s^{-1}(\{n\})$ of a given
even number $n$ in time $O(n^{1/2+\varepsilon})$; as a corollary, we
obtain the estimate $\#s^{-1}(\{n\})\ll n^{1/2+\varepsilon}$, which
improves on a recent result of Pomerance \cite[Corollary~3.6]{pomerance}.
In Section~\ref{sec:examples} we apply the algorithm to all even amicable
pairs with smaller element below $10^{14}$, and to all known\footnote{The
list of known cycles is likely complete up to at least $10^{14}$. However,
there are many open-ended aliquot sequences beginning with a number
below that bound, so it is impossible to say for sure that the list is
complete without imposing an upper bound on the cycle length. It might
even be the case that the completeness of the list is undecidable and
cannot be certified with a finite computation!} aliquot cycles of order
$\ne2$ with smallest element below $10^{17}$. In this way we identify
many interesting examples of finite connected components.

Concerning the second objective, note first that if $p$ and $q$ are
distinct primes then $s(pq)=p+q+1$. As a slight strengthening of the
Goldbach conjecture, we have the following:
\begin{center}
\textbf{Hypothesis G}.
\textit{Every even number at least $8$ is the sum of two
distinct primes}.
\end{center}
There is ample evidence in favor of Hypothesis~G: Lu \cite{lu} showed
that it holds for all but at most $O(x^{0.879})$ even numbers $\le x$,
and Oliveira~e~Silva et al.~\cite{esilva} ran a large distributed computation
to verify it for all even $n\in[8,4\times10^{18}]$.\footnote{Strictly
speaking, they only verified the Goldbach conjecture, which is weaker
than Hypothesis~G for numbers of the form $2p$ for $p$ prime. However,
for every even $n\in[6,4\times10^{18}]$, they found a Goldbach partition
$n=p+q$ with $p\le 9781$. Hence, it suffices to verify Hypothesis~G
for $n=2p$ for all primes $p\in[5,9781]$.}

Assuming Hypothesis~G, for any odd number $n\ge9$, we have $n=p+q+1=s(pq)$
for distinct odd primes $p$, $q$. Since $pq>n$ and is again odd, we
can repeat this construction to see that the inverse orbit $\{n\}\cup
s^{-1}(\{n\})\cup s^{-1}(s^{-1}(\{n\}))\cup\cdots$ is infinite;
in particular, $n$ has infinite connected component. Note also that
$1=s(11)$, $3=s(s(9))$ and $7=s(s(49))$. Thus, under Hypothesis~G, every
odd number except $5$ has infinite inverse orbit and, since $s(5)=1$,
every odd number has infinite connected component.\footnote{Note that
$1=s(p)$ for every prime $p$, so its connected component is trivially
infinite under our definition. Some authors prefer to exclude $1$
from the aliquot graph to avoid this triviality. Fortunately, under
Hypothesis~G, the only difference that this makes to our question
of finite connected components is that $2$ and $5$ become singleton
components.} Unconditionally, Erd\H{o}s et al.~\cite[Theorem~5.3]{egps}
showed that infinite inverse orbits exist; in fact all but a density
zero subset of the odd numbers have infinite inverse orbit, although
the method of proof does not enable one to exhibit a specific such number.

We say that a connected component of the aliquot graph is
\emph{potentially infinite} if it contains an odd number. Absent a
proof of Hypothesis~G (including the Goldbach conjecture), we cannot
prove that a given potentially infinite connected component is actually
infinite, unless it is shown to contain $1$.  However, we will take
Hypothesis~G for granted in what follows, so our numerical results will
be conditional upon it.  With this caveat, in Section~\ref{sec:1e9} we
describe a computation determining the complete list of finite connected
components of the aliquot graph that contain a number below $10^9$.

Finally, in Section~\ref{sec:conclusion} we conclude with some related
questions and speculations suggested by the numerics.

\subsection*{Notation}
We shall make frequent use of the following symbols for arithmetic functions:
\begin{alignat*}{2}
\omega(n)&=\sum_{\substack{p\mid n\\p\text{ prime}}}1
&&\text{is the number of distinct prime factors of }n,\\
\Omega(n)&=\sum_{p^k\parallel n}k
&&\text{is the number of prime factors of $n$, counted with multiplicity},\\
\sigma_k(n)&=\sum_{d\mid n}d^k
&&\text{is the sum of $k$th powers of the divisors of }n,\\
\sigma(n)&=\sigma_1(n)=s&&(n)+n.
\end{alignat*}

\subsection*{Acknowledgements}
This paper was inspired by the work of Jean-Luc Garambois
\cite{garambois}, as well as posts by David Stevens and another user,
who wishes to remain anonymous, on \url{mersenneforum.org}. I thank
them for raising interesting questions. I also thank Carl Pomerance for
helpful comments and for pointing out the related results in \cite{egps}
and \cite{pomerance}.

\section{An algorithm for $s^{-1}$}\label{sec:alg}
Suppose that $n\in\N$ is given, and we wish to find $m\in\N$ satisfying
$s(m)=n$. If $n\ge9$ is odd, then searching through small primes $p$,
we expect to find one quickly (polynomial time in $\log{n}$) such
that $q=n-1-p$ is prime, so that $n=s(pq)$; although a proof of this
seems far off, that does not prevent it from working well in practice
to find an element of $s^{-1}(\{n\})$, even for very large odd $n$.
On the other hand, it is conjectured that all large odd $n$ have $\gg
n/\log^2n$ representations as $p+q+1$ (and this certainly holds for
at least some arbitrarily large $n$, by the prime number theorem and
pigeonhole principle), and it follows that no algorithm can compute all
of $s^{-1}(\{n\})$ in fewer than $O(n/\log{n})$ bit operations.
In light of this, and since our application requires only even values,
we assume henceforth that $n$ is even.

Let us first consider the possibility of odd $m$.
If $n\in2\N$ and $m\in1+2\N$, then it is easy to see
that $m$ must be a square. Let $p$ be the largest prime factor of $m$,
and write $m=a^2p^{2k}$, with $p\nmid a$. Then we have
\begin{equation}\label{eq:sa2}
n=s(m)=s(a^2)p^{2k}+\sigma(a^2)(1+p+\ldots+p^{2k-1}),
\end{equation}
so that $a^2\le n/(1+\ldots+p^{2k-1})$ and
$k\le\frac12[1+\log_p(n/\sigma(a^2))]$.
For $a=1$ and each odd $a\in[3,\sqrt{n/6}]$, we run through all
$k\le\frac12[1+\log_q(n/\sigma(a^2))]$, where $q$ is the smallest odd
number $\ge3$ exceeding every prime factor of $a$, perform a binary search for
integral $p\ge q$ satisfying \eqref{eq:sa2}, and apply a primality test.
(For our implementation, which was limited to $n<2^{64}$, we used a
strong Fermat test to base $2$, together with the classification \cite{feitsma}
of small strong pseudoprimes.)

Next we consider $m\in2\N$. In this case, since $m/2$ is a proper divisor of
$m$, we have $s(m)\ge m/2$, whence $m\le2n$. We write
$m$ in the form $ab$, where we
think of $a\in2\N$ as the ``smooth'' part of $m$, with only small prime
factors, and $b$ as the rest. Then we have
\begin{equation}\label{eq:ab}
n=s(m)=\sigma(a)s(b)+s(a)b.
\end{equation}
For a fixed choice of $a$, we
view \eqref{eq:ab} as a linear equation constraining the pair
$(s(b),b)$. First note that if $g=\gcd(\sigma(a),s(a))=\gcd(a,s(a))$, then
\eqref{eq:ab} has no solutions unless $g\mid n$. When $g\mid n$, we put
$u=\sigma(a)/g$ and $v=s(a)/g$, so that
$(x,y)=(s(b),b)$ is a solution to $ux+vy=n/g$.
Using the Euclidean algorithm, we can determine a particular solution
$(x_0,y_0)\in\Z^2$, and the general solution in positive integers is
then $(x,y)=(x_0+rv,y_0-ru)$ for $r\in\Z\cap(-\frac{x_0}v,\frac{y_0}u)$.
Our algorithm proceeds by working recursively through all possible
prime factorizations of $a$. For the base case of the recursion, once
the number of possibilities for $b$ is small enough, we test all of
them to see if the equality $n=s(ab)$ is satisfied.

As described, this method is only a little more
efficient than directly considering every possible even $m\le 2n$, but
fortunately there are a few ways in which we can reduce the search
space. First, we can detect the cases $b=p$ or $b=p^2$ for a prime $p$
by solving \eqref{eq:ab}, which gives a linear or quadratic equation for
$p$, and applying a primality test.
Second, in the typical case when $b$ has no small prime factors, we
can narrow the range for $s(b)$ using the following estimate:
\begin{lemma}
Let $b>1$ be an integer with smallest prime factor $p$. Then
$s(b)\in[b/p,b\Omega(b)/p]$.
\end{lemma}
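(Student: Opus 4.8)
The plan is to handle the lower bound trivially and reduce the upper bound to a clean estimate for $\sigma(b)/b$.

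\emph{Lower bound.} Since $p\mid b$ and $b>1$, the number $b/p$ is a proper divisor of $b$, so $s(b)=\sum_{d\mid b,\,d<b}d\ge b/p$.

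\emph{Upper bound.} Writing $s(b)=\sigma(b)-b$, it suffices to show $\sigma(b)/b\le 1+\Omega(b)/p$. I would start from the Euler product
\[
\frac{\sigma(b)}{b}=\sum_{d\mid b}\frac1d=\prod_{q^a\parallel b}\Bigl(1+\tfrac1q+\cdots+\tfrac1{q^a}\Bigr).
\]
The key move is to dominate each local factor by a product of ``atomic'' factors: expanding $\prod_{k=1}^{a}(1+q^{-k})$ and discarding the terms coming from subsets of size $\ge2$ shows $1+q^{-1}+\cdots+q^{-a}\le\prod_{k=1}^{a}(1+q^{-k})$. Multiplying over the primes $q\mid b$ gives
\[
\frac{\sigma(b)}{b}\le\prod_{k\ge1,\ q^k\mid b}\bigl(1+q^{-k}\bigr),
\]
a product of exactly $\Omega(b)$ factors, indexed by the prime powers dividing $b$. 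Now list the numbers $q^{-k}$ occurring here in decreasing order as $v_1>v_2>\cdots>v_{\Omega(b)}$ (they are distinct, since distinct prime powers are distinct integers), and I claim $v_j\le 1/(p+j-1)$ for every $j$. Indeed, $v_j>1/(p+j-1)$ would force $v_1,\dots,v_j$, hence $j$ distinct prime powers dividing $b$, to be strictly less than $p+j-1$; but each such prime power is at least $p$ (as $p$ is the least prime factor of $b$), so we would obtain $j$ distinct integers inside $[p,\,p+j-2]$, an interval containing only $j-1$ integers. Therefore
\[
\frac{\sigma(b)}{b}\le\prod_{j=1}^{\Omega(b)}\Bigl(1+\frac1{p+j-1}\Bigr)=\prod_{j=1}^{\Omega(b)}\frac{p+j}{p+j-1}=\frac{p+\Omega(b)}{p},
\]
the product telescoping, and rearranging gives $s(b)\le b\,\Omega(b)/p$.

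The part requiring real insight is the reduction to the product of $\Omega(b)$ atomic factors together with the pigeonhole bound $v_j\le1/(p+j-1)$ on its sorted entries; the rest is bookkeeping. I would explicitly steer clear of attacking the upper bound by induction on the number of prime factors — peeling off one prime power at a time and invoking the inductive hypothesis loses too much, because the target bound $1+\Omega(b)/p$ does not interact well with the multiplicativity of $\sigma(b)/b$ (it is essentially tight only at primes and at $b=6$).
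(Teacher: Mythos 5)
Your proof is correct and follows essentially the same path as the paper's: reduce $\sigma(b)/b=\sigma_{-1}(b)$ to the product $\prod(1+1/q_j)$ over the $\Omega(b)$ prime powers $q_j$ dividing $b$, bound the $j$th smallest such prime power below by $p+j-1$, and telescope. The only cosmetic difference is how the first reduction is obtained — you expand each local Euler factor over subsets, while the paper telescopes $\sigma_{-1}$ along the lcm chain of sorted prime powers and bounds each ratio by $1+1/q_j$ — but the underlying inequality and the remainder of the argument are identical.
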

\begin{proof}
Since $b/p$ is a proper divisor of $b$, we have $s(b)\ge b/p$,
directly from the definition. For the upper bound,
let $\prod_{i=1}^{\omega(b)}p_i^{e_i}$ be the
prime factorization of $b$, consider the set
$$
S=\bigcup_{i=1}^{\omega(b)}\{p_i,p_i^2,\ldots,p_i^{e_i}\},
$$
and write $S=\{q_1,\ldots,q_{\Omega(b)}\}$, with
$q_1<\cdots<q_{\Omega(b)}$ in increasing order.
Next set $b_0=1$ and $b_j=\lcm(q_1,\ldots,q_j)$ for
$j=1,\ldots,\Omega(b)$. Then
$$
\sigma_{-1}(b)=\prod_{j=1}^{\Omega(b)}
\frac{\sigma_{-1}(b_j)}{\sigma_{-1}(b_{j-1})}.
$$
Consider $j\in\{1,\ldots,\Omega(b)\}$, and suppose that
$q_j=p_i^k$. Then $b_j=p_ib_{j-1}$ and
$$
\frac{\sigma_{-1}(b_j)}{\sigma_{-1}(b_{j-1})}
=\frac{\sigma_{-1}(p_i^k)}{\sigma_{-1}(p_i^{k-1})}
=1+\frac1{p_i+\ldots+p_i^k}
\le1+\frac1{q_j}.
$$
Since $q_1=p$ and the $q_j$ are strictly increasing, we thus have
$$
\sigma_{-1}(b)\le\prod_{j=1}^{\Omega(b)}\left(1+\frac1{q_j}\right)
\le\prod_{j=1}^{\Omega(b)}\left(1+\frac1{p+j-1}\right)
=1+\frac{\Omega(b)}{p}.
$$
Hence
$$
\frac{s(b)}{b}=\sigma_{-1}(b)-1\le\frac{\Omega(b)}{p}.
$$
\end{proof}
Although we do not know
$p$ in advance, we will know a lower bound for it in the course of the
recursion. Supposing that $p\ge p_1$ and that we have already checked
the cases $b=1$, $b=p$ and $b=p^2$, we have
\begin{equation}\label{eq:b1s1}
b\ge b_1:=p_1p_1'\quad\text{and}\quad s(b)\ge s_1:=1+p_1+p_1',
\end{equation}
where $p_1'$ denotes the smallest prime exceeding $p_1$. Thus, defining
\begin{equation}\label{eq:b2s2}
b_2=\frac{n-\sigma(a)s_1}{s(a)},\quad
k=\left\lfloor\frac{\log{b_2}}{\log{p_1}}\right\rfloor
\quad\text{and}\quad
s_2=\frac{kn}{k\sigma(a)+p_1s(a)},
\end{equation}
we have
$$
s(b)(\sigma(a)+s(a)p_1/k)\le\sigma(a)s(b)+s(a)b=n,
$$
so that $b\in[b_1,b_2]$ and $s(b)\in[s_1,s_2]$.
We stop the recursion and test every value of $b$ once the number of
$(x,y)\in[s_1,s_2]\times[b_1,b_2]$ satisfying
$ux+vy=n/g$ falls below $p_1$.

Third, the solutions with $b=pq$ for distinct primes $p$ and $q$ can
also be determined without searching, since in this case we have
$$
(vp+u)(vq+u)=v^2pq+uv(p+q)+u^2=v(n/g-u)+u^2=(au+nv)/g.
$$
Thus, factoring $(au+nv)/g$ and testing all of its divisors
$\equiv u\pmod{v}$ will reveal $p$ and $q$. Since $(au+nv)/g$ is
potentially quite large, this test is more expensive than that for $b=p$
or $p^2$, so we use it only when $p_1$ is large enough to guarantee that
$b$ is a product of two primes.

\algnewcommand\algorithmicinput{\textbf{Input:}}
\algnewcommand\Input{\item[\algorithmicinput]}
\algnewcommand\algorithmicoutput{\textbf{Output:}}
\algnewcommand\Output{\item[\algorithmicoutput]}
\algnewcommand{\IIf}[1]{\State\algorithmicif\ #1\ \algorithmicthen}
\algnewcommand{\EndIIf}{\unskip\ \algorithmicend\ \algorithmicif}
\begin{algorithm}
\caption{Procedure to compute $s^{-1}(\{n\})$ for $n\in2\N$}\label{alg}
\begin{algorithmic}
\Function{s\_inverse}{$n$}
\Input{$n\in2\N$}
\Output{list of $m\in\N$ such that $s(m)=n$}
\State initialize the output list
\For{each $a\in\{1\}\cup[3,\sqrt{n/6}]\cap(1+2\N)$}
\State compute $s(a^2)$ and the smallest odd number $q\ge3$ exceeding
every prime factor of $a$
\For{each $k\in\N$ such that $q^{2k-1}\le n/\sigma(a^2)$}
\State solve \eqref{eq:sa2} for $p$
\IIf{$p$ is a prime $\ge q$} append $a^2p^{2k}$ to the output list \EndIIf
\EndFor
\EndFor
\For{each $k\in\N$ such that $2^k<n$}
\State call \Call{s\_inverse\_even\_recursion}{$2^k$}
\EndFor
\State\Return the output list
\EndFunction

\medskip
\Procedure{s\_inverse\_even\_recursion}{$a$}
\Input{$a\in 2\N$}
\Ensure{appends to the output list all $m=ab$ such that $s(m)=n$, $b>1$ and
the smallest prime factor of $b$ exceeds the largest prime factor of $a$}
\State compute $g=\gcd(s(a),\sigma(a))$, and \Return if $g\nmid n$
\State check for solutions to \eqref{eq:ab} with $b=p$ and $b=p^2$, and
append them to the output list
\State compute $u=\sigma(a)/g$, $v=s(a)/g$, and $(x_0,y_0)$ such that
$ux_0+vy_0=n/g$
\For{primes $p_1$ exceeding the largest prime factor of $a$, in
increasing order,}
\State compute the intervals $[s_1,s_2]$ and $[b_1,b_2]$ defined in
\eqref{eq:b1s1}--\eqref{eq:b2s2}
\If{$\#\{r\in\Z:x_0+rv\in[s_1,s_2]\text{ and }y_0-ru\in[b_1,b_2]\}<p_1$}
\For{each such $r$}
\State compute $b=y_0-ru$ and $s(b)$
\If{every prime factor of $b$ is at least $p_1$ and $s(b)=x_0+rv$}
\State append $ab$ to the output list
\EndIf
\EndFor
\State\Return
\EndIf
\If{$s(ap_1^3)>n$}
\State factor $N=(au+nv)/g$ and find all of its divisors $d<\sqrt{N}$
satisfying $d\equiv u\pmod{v}$
\For{each such $d$}
\State compute $p=(d-u)/v$ and $q=(N/d-u)/v$
\IIf{$p$ and $q$ are primes $\ge p_1$}
append $apq$ to the output list
\EndIIf
\EndFor
\State\Return
\EndIf
\For{each $k\in\N$ such that $s(ap_1^k)\le n$}
\If{$s(ap_1^k)<n$}
\State call \Call{s\_inverse\_even\_recursion}{$ap_1^k$}
\ElsIf{$k\ge3$}
\State append $ap_1^k$ to the output list
\EndIf
\EndFor
\EndFor
\EndProcedure
\end{algorithmic}
\end{algorithm}

Our procedure is described in more detailed pseudocode in
Algorithm~\ref{alg}. We turn now to the running time analysis. First,
by either using a sieve to amortize the factorization of $a$ or working
recursively through the possible factorizations,
we see that it takes at most $O_\varepsilon(n^{1/2+\varepsilon})$
bit operations to find all odd $m$ with $s(m)=n$.
For even $m$, note that each prime $p_1$ considered before the recursion
is stopped satisfies
$$
p_1\le\#\{r\in\Z:x_0+rv\in[s_1,s_2]\text{ and }y_0-ru\in[b_1,b_2]\}
\le\frac{s_2}{v}+1,
$$
and together with \eqref{eq:b2s2} this implies the bound
$p_1\le\frac{\sqrt{gn\log_3{n}}}{s(a)}$.
%For even $m$, note that \eqref{eq:b2s2} implies
%$$
%\#\{r\in\Z:x_0+rv\in[s_1,s_2]\text{ and }y_0-ru\in[b_1,b_2]\}
%\le\frac{\sqrt{gn\log_3{n}}}{s(a)},
%$$
%and this is an upper bound on the primes $p_1$ that are considered before
%the recursion is stopped.
To simplify the analysis, we
consider a modified version of the algorithm in which
we omit the checks for $b=p$, $b=p^2$ and $b=pq$, and stop the recursion
once $p_1>\sqrt{n}/a$. (These simplifications make the algorithm slightly
less efficient, but one can see that they increase the running time by a
factor of $O_\varepsilon(n^\varepsilon)$ at most.)

Suppose that the recursive procedure
is called with input $a$, and let $p$ denote the largest prime factor
of $a$, with $p^k\parallel a$. Then either $p=2$ or the criterion for
stopping the recursion was not satisfied when considering $a/p^k$, so
that $p\le\sqrt{n}/(a/p^k)$. We may assume that $n\ge4$, so in either
case, writing $f(a)=a/p^{k-1}$, we have $f(a)\le\sqrt{n}$.
Note that $f(a)$ is again an even integer with largest prime factor $p$.
Thus,
\begin{align*}
\#\{a\in2\N:a\le2n,\,f(a)\le\sqrt{n}\}
&=\sum_{\substack{t\in2\N\\t\le\sqrt{n}}}\#\{a\in2\N:a\le2n,\,f(a)=t\}\\
&\le\sum_{\substack{t\in2\N\\t\le\sqrt{n}}}
\left(1+\log_2\bigl(\tfrac{2n}{t}\right)\bigr)
\le\tfrac12\sqrt{n}\log_2(2n),
\end{align*}
and this gives an upper bound for the number of times
that the recursive procedure is called.

Next, let $p_1$ denote the smallest prime exceeding both $\sqrt{n}/a$
and every prime factor of $a$. Then by \eqref{eq:b2s2}, the values of
$b$ that we consider in the base case of the recursion for $a$ satisfy
$$
s(b)<\frac{kn}{p_1s(a)}\le\frac{n\log_{p_1}{n}}{p_1s(a)}
\le\frac{n\log_3{n}}{(\sqrt{n}/a)(a/2)}
=2\sqrt{n}\log_3{n}.
$$
Moreover, $s(b)$ is determined modulo $v=s(a)/g$, so the number of
possibilities to consider is at most
$$
1+\frac{2\sqrt{n}\log_3{n}}{s(a)/g}
\le1+\frac{4g\sqrt{n}\log_3{n}}{a}.
$$
Summing over all $g\mid n$ and $a$ satisfying $\gcd(s(a),\sigma(a))=g$,
we see that the total number of candidate values for $b$ is bounded by
\begin{align*}
\sum_{g\mid n}\sum_{\substack{a\in2\N\cap[2,2n]\\
f(a)\le\sqrt{n}\\\gcd(a,\sigma(a))=g}}
\left(1+\frac{4g\sqrt{n}\log_3{n}}{a}\right)
&\le\sum_{\substack{a\in2\N\cap[2,2n]\\f(a)\le\sqrt{n}}}1
+\sum_{g\mid n}\sum_{\substack{a\le 2n\\g\mid a}}
\frac{4\sqrt{n}\log_3{n}}{a/g}\\
&\ll\sigma_0(n)\sqrt{n}\log^2{n}\ll_\varepsilon n^{1/2+\varepsilon}.
\end{align*}

The largest prime factor of $a$ and the value of $s(a)$ can be carried
along as extra state information during the recursion, so no work
is required to factor $a$. On the other hand, we can expect the $b$
values that arise to occur sparsely throughout $(0,n)$, and we need
to factor them in order to compute $s(b)$. In practice, one can use a
generic factoring algorithm with good average-case performance. To get
a provable estimate for the running time, it suffices to record all
of the candidate pairs $(a,b)$ in a list and apply Bernstein's batch
factorization algorithm \cite{bernstein} to the $b$ values. Since
there are $O_\varepsilon(n^{1/2+\varepsilon})$ pairs and each $b$
is bounded by $n$, the total time to factor all of them is still
$O_\varepsilon(n^{1/2+\varepsilon})$.

Thus, we have shown the following.
\begin{theorem}
The algorithm described in this section computes $s^{-1}(\{n\})$ for a
given $n\in2\N$ in time at most $O_\varepsilon(n^{1/2+\varepsilon})$.
\end{theorem}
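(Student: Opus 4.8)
The statement is essentially a bookkeeping consequence of the estimates established above, so the plan is to assemble those into one argument, treating odd and even preimages separately; correctness of the enumeration itself I would dispatch in a sentence, noting that \eqref{eq:sa2} forces $a^2\le n$ and $q^{2k-1}\le n/\sigma(a^2)$ for the odd preimages $m=a^2p^{2k}$, while every even preimage satisfies $m\le 2n$ and factors as $ab$ with $a$ built from the small primes peeled off by the recursion, so \eqref{eq:ab} together with the stopping rules makes both loops of \textsc{s\_inverse} exhaustive. The whole content is therefore the running time.

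For the odd part I would note that the outer loop has $O(\sqrt n)$ iterations and the inner loop $O(\log n)$, and that each iteration costs only $n^{o(1)}$ for a binary search in \eqref{eq:sa2} and a primality test; the sole genuine cost is factoring the values of $a$, handled either by a sieve up to $\sqrt n$ or by recursing through the factorizations of $a$, for a total of $O_\varepsilon(n^{1/2+\varepsilon})$.

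For the even part the first move is to pass to the simplified algorithm --- omit the $b=p$, $b=p^2$, $b=pq$ shortcuts and halt the recursion once $p_1>\sqrt n/a$ --- since these changes cost at most a factor $O_\varepsilon(n^\varepsilon)$. Two quantities then need bounding. First, the number of recursive calls: here the device is the map $f(a)=a/p^{k-1}$, where $p^k\parallel a$ with $p$ the largest prime factor of $a$; because the halting test failed for $a/p^k$ (or $p=2$) one gets $f(a)\le\sqrt n$, and since $f(a)$ is an even integer and each even $t\le\sqrt n$ has at most $1+\log_2(2n/t)$ preimages under $f$ in $[2,2n]$, summation gives at most $\tfrac12\sqrt n\log_2(2n)$ calls. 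Second, the total number of base-case candidates: for a call with input $a$, the Lemma gives $s(b)\le b\,\Omega(b)/p_1$, which with \eqref{eq:b1s1}--\eqref{eq:b2s2} confines $s(b)$ to an interval of length $O(\sqrt n\log_3 n)$, and as $s(b)$ is fixed modulo $v=s(a)/g$ there are $1+O(g\sqrt n\log_3 n/a)$ candidates; summing over $g\mid n$ and over $a$ with $\gcd(a,\sigma(a))=g$ yields $\ll\sigma_0(n)\sqrt n\log^2 n\ll_\varepsilon n^{1/2+\varepsilon}$ candidates in all. Carrying the largest prime factor of $a$ and the value $s(a)$ as recursion state removes any need to factor $a$, while the $b$'s --- all $\le n$, and $O_\varepsilon(n^{1/2+\varepsilon})$ in number --- are collected into a single list and factored in one pass by Bernstein's batch algorithm \cite{bernstein}, again in $O_\varepsilon(n^{1/2+\varepsilon})$ time. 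Adding the two phases gives the theorem.

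I expect the main obstacle to be the first of those two bounds: preventing the recursion tree from exploding. This is precisely why the halting rule is stated as ``$p_1>\sqrt n/a$'' rather than via the sharper $p_1\le\sqrt{gn\log_3 n}/s(a)$ noted earlier --- the cruder rule is what makes $f(a)\le\sqrt n$ drop out cleanly --- and the near-injectivity of $f$ on $[2,2n]$ is the complementary ingredient. Granting that, the candidate count is a routine consequence of the Lemma, whose sole purpose is to keep $[s_1,s_2]$ short, and the one loose end, that the three omitted shortcuts cost only $n^\varepsilon$, I would settle by a direct count of the divisors of $(au+nv)/g$ lying in the residue class $u$ modulo $v$.
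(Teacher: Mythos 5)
Your proposal reproduces the paper's proof essentially line for line: same reduction to the simplified algorithm, same use of the map $f(a)=a/p^{k-1}$ with $f(a)\le\sqrt n$ to bound the number of recursive calls by $\tfrac12\sqrt n\log_2(2n)$, same Lemma-plus-congruence count of $1+O(g\sqrt n\log_3 n/a)$ base-case candidates summed over $g\mid n$, and same appeal to Bernstein's batch factorization. The observation about why the stopping rule $p_1>\sqrt n/a$ is chosen over the sharper bound is a correct reading of the paper's intent, and the one loose end you flag (that the omitted shortcuts cost only $n^\varepsilon$) is likewise asserted without proof in the paper itself.
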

\begin{corollary}
For $n\in2\N$, $\#s^{-1}(\{n\})\ll_\varepsilon n^{1/2+\varepsilon}$.
\end{corollary}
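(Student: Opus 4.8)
The plan is to deduce the corollary directly from the theorem, using the trivial but robust observation that an algorithm cannot produce more output than its running time permits. The algorithm of this section is guaranteed to output a list containing every $m\in\N$ with $s(m)=n$, so the cardinality of $s^{-1}(\{n\})$ is at most the number of ``append to the output list'' operations performed during the computation, which in turn is at most the total running time. By the theorem the latter is $O_\varepsilon(n^{1/2+\varepsilon})$, and the corollary follows at once.

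The only point requiring any care is that we are invoking the \emph{correctness} of the algorithm — that each element of $s^{-1}(\{n\})$ genuinely appears in the output — rather than merely its efficiency; but this is already part of the statement of the theorem (``computes $s^{-1}(\{n\})$''), so no additional work is needed. Note also that it is irrelevant whether some value $m$ is ever appended more than once: an upper bound on the length of the output list counted with multiplicity is still an upper bound for $\#s^{-1}(\{n\})$. Thus there is essentially no obstacle here at all; the content is entirely contained in the running-time analysis.

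For completeness one can also read the estimate off that analysis without treating the algorithm as a black box. The odd solutions $m=a^2p^{2k}$ are counted by $\sum_a\bigl(1+\tfrac12\log_q(n/\sigma(a^2))\bigr)$ as $a$ ranges over $\{1\}\cup\bigl([3,\sqrt{n/6}]\cap(1+2\N)\bigr)$, which is $O(\sqrt{n}\log n)$; and the even solutions $m=ab$ form a subset of the candidate pairs $(a,b)$ enumerated in the recursion (together with the $O(1)$ solutions of type $b=p$, $b=p^2$, and the $O_\varepsilon(n^{\varepsilon})$ solutions of type $b=pq$ arising at each of the $O(\sqrt{n}\log n)$ recursive calls), whose total number was already bounded by $O_\varepsilon(n^{1/2+\varepsilon})$. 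Adding the two contributions gives $\#s^{-1}(\{n\})\ll_\varepsilon n^{1/2+\varepsilon}$ once more.
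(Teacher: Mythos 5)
Your argument is correct and is exactly what the paper intends: the corollary is an immediate consequence of the theorem, since the algorithm's output list contains every element of $s^{-1}(\{n\})$ and its length cannot exceed the running time $O_\varepsilon(n^{1/2+\varepsilon})$. The paper leaves this implicit, so your proposal simply spells out the same (trivial) deduction.
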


\section{Numerical results}
\subsection{Examples of finite connected components}\label{sec:examples}
For any given $n\in\N$, the forward orbit of $n$ under $s$ either
terminates with $1$, grows without bound, or enters a cycle. In the
first two cases, $n$ must have infinite connected component. Hence,
to find finite connected components, it suffices to consider only those
$n$ contained in a cycle, and compute their inverse orbits.
For each even amicable pair with smaller element below $10^{14}$, as
well as the known perfect or sociable cycles with smallest element below
$10^{17}$, we started with the smallest $n$ in the cycle and
iteratively computed $s^{-1}(\{n\})$, $s^{-1}(s^{-1}(\{n\}))$, \ldots\
until reaching either the empty set or a set containing an odd number.
In the former case, $n$ has finite connected component, and our
computation determines it entirely; in the latter case, assuming
Hypothesis~G, the connected component is infinite.

It is also conceivable that there are $n$ for which neither case occurs,
and the procedure does not terminate. However, for any $n$, the elements
of $s^{-1}(\{n\})\cap2\N$ are bounded by $2n$, so chains of even numbers
in the inverse orbit of $n$ grow at most exponentially in the iteration
count. Moreover, for any $m$ of the form $p+1$ for prime $p$, we have
$m=s(p^2)$. We see no reason why numbers of this form should not occur
among the elements of the inverse orbit of $n$ with the same frequency
as for random numbers of the same size. Thus, provided that the $k$th
iterate of $s^{-1}$ applied to $\{n\}$ is non-empty, we expect it to
contain an odd number with probability $\gg 1/k$. Since the harmonic
series diverges, we therefore expect to reach an odd number eventually,
as long as the inverse orbit is infinite. This was borne out by our
numerics, as every connected component that we considered was found to
be either finite or potentially infinite.\footnote{However, in the case
of the amicable pair $\{48569114359984,49074636040016\}$, the numbers
exceeded the 64-bit limit of our implementation without reaching an odd
number. We wrote a special-purpose routine to continue the search in
this case, looking for $m$ of the form $ap$ with $a<2\times10^{10}$
and $p$ prime, and fortunately that sufficed to prove that
$s^{25}(18471983707171354573^2)=49074636040016$.}

Of the $24003$ even amicable pairs that we considered,
$7438$ pairs were found to belong to a finite connected component, and
of those, $2394$ were isolated cycles. The average size of the
components was $37968/7438\approx 5.1$, and the largest was of size
$58$, corresponding to the amicable pair
$\{29215166389256,31021462090744\}$; it is shown in
Figure~\ref{fig:amicable}.
\begin{figure}
\caption{The largest finite connected component containing an amicable
pair with smaller element below $10^{14}$}\label{fig:amicable}
\begin{center}
\includegraphics[width=\textwidth]{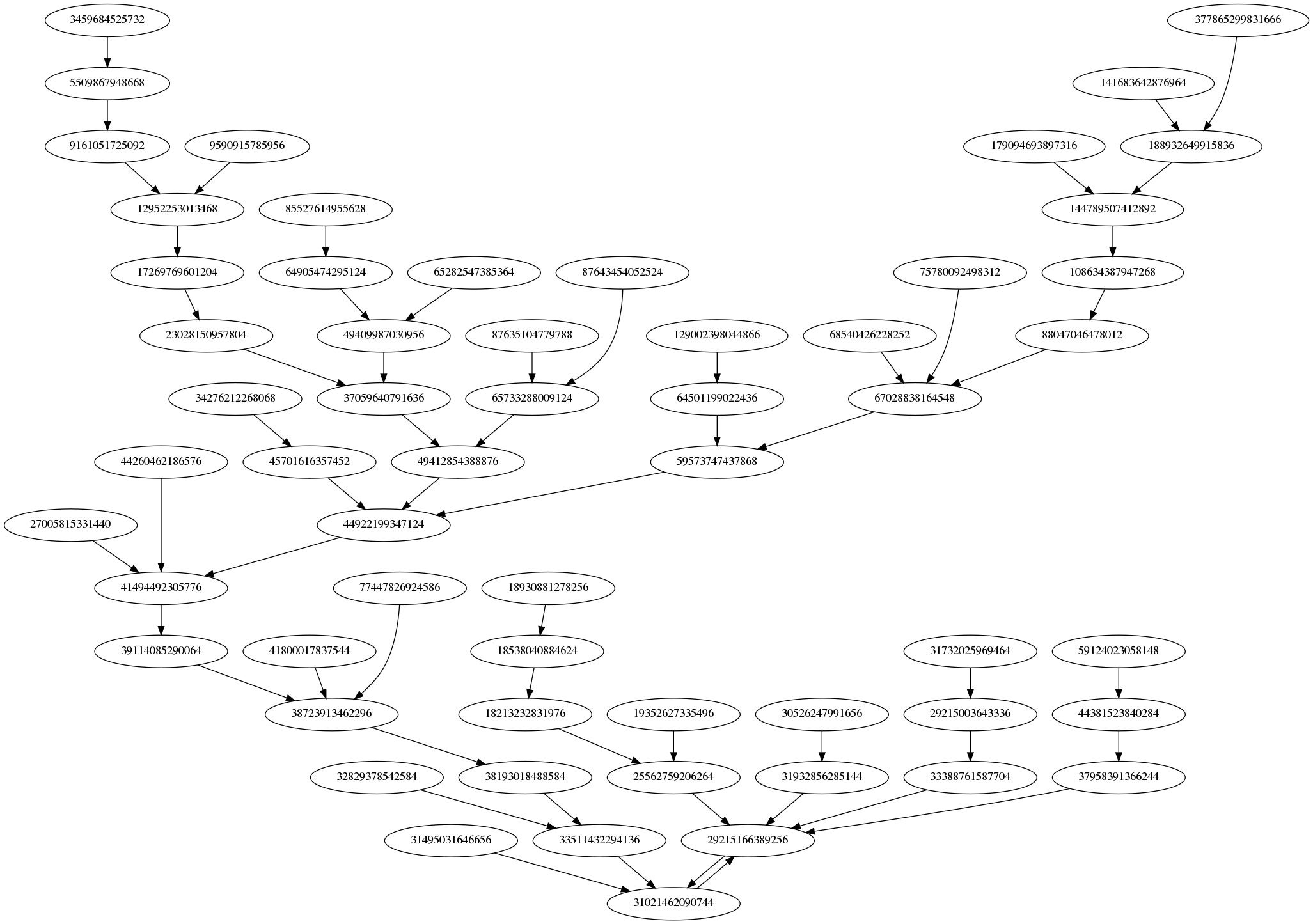}
\end{center}
\end{figure}

For even aliquot cycles of size other than $2$,
only $75$ are known with smallest
element below $10^{17}$. We found $12$ belonging
to a finite connected component, of which three are isolated cycles
(including the perfect numbers $28$ and $137438691328$);
they are shown in Figure~\ref{fig:sociable}.

\begin{figure}
\caption{The finite connected components
containing a known cycle of order $\ne2$ with a node
$\le10^{17}$}\label{fig:sociable}
\begin{center}
\includegraphics[width=\textwidth]{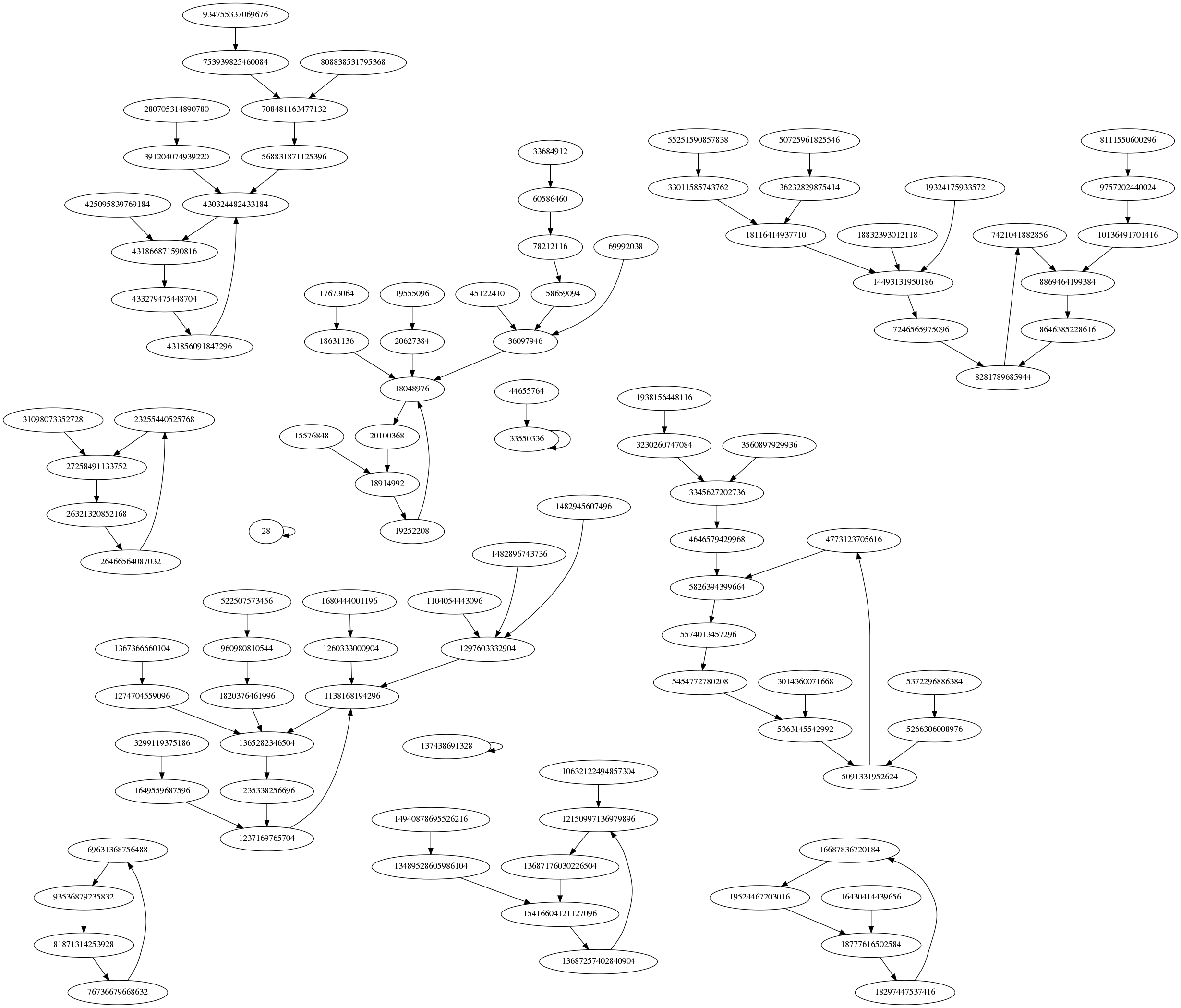}
\end{center}
\end{figure}

\subsection{Finite connected components containing a small node}\label{sec:1e9}
Towards our second objective, we found, conditional on Hypothesis~G, the
complete list of finite connected components of the aliquot graph
containing a number $\le10^9$. As it turns out, there are $101$ such
components, compared to $453$ known cycles of even numbers in that range.
They are comprised of $462$ nodes, $88$ of which exceed $10^9$.
The $14$ examples containing a number below $10^7$ are
shown in Figure~\ref{fig:1e7}.

\begin{figure}
\caption{All finite connected components
containing a node $\le10^7$}\label{fig:1e7}
\begin{center}
\includegraphics[width=0.6\textwidth]{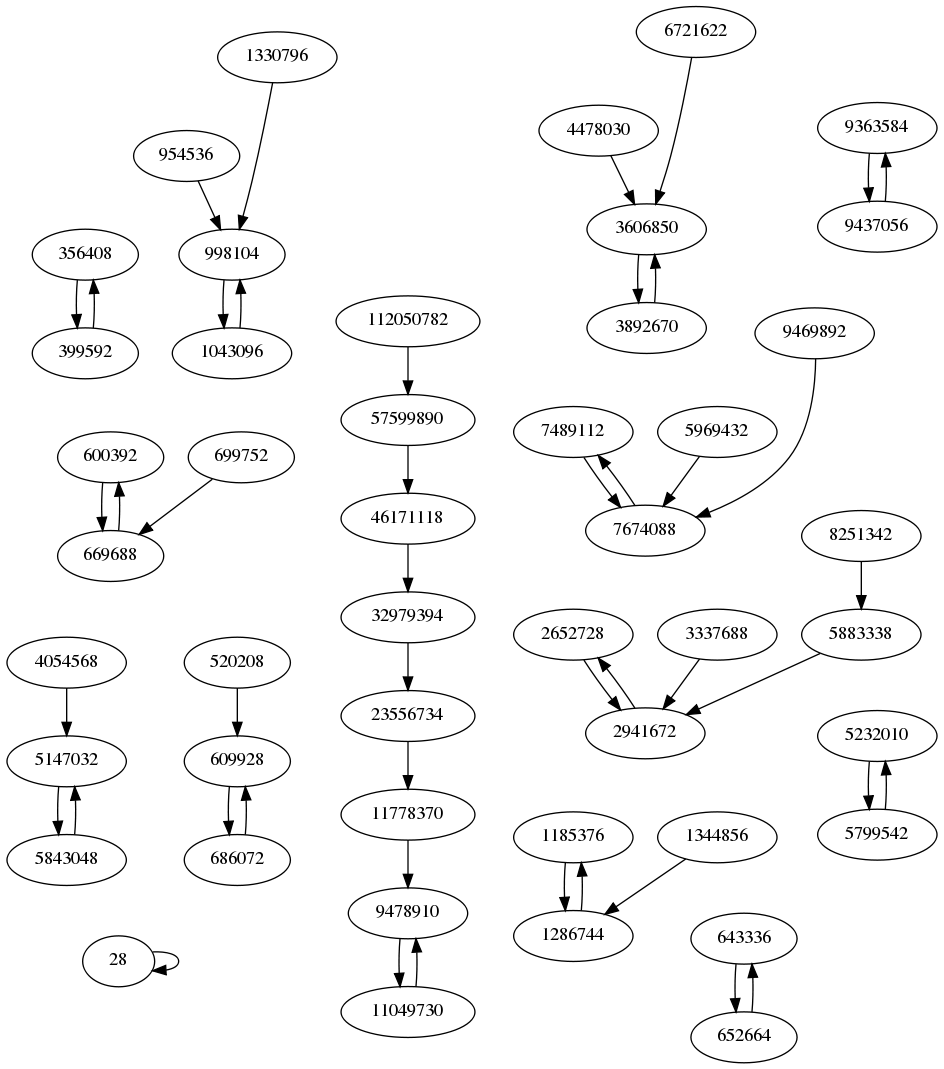}
\end{center}
\end{figure}

Our computation proceeded as follows.
First, beginning with each even number $n\le10^9$, we used
PARI/GP \cite{pari} to compute the forward orbit
$n$, $s(n)$, \ldots, until arriving at a number $m=s^k(n)$
satisfying one of the following conditions:
\begin{enumerate}
\item $m$ is odd;
\item $m-1$ is prime;
\item $m=s^j(n)$ for some $j<k$;
\item $m\ge 10^{50}$.
\end{enumerate}
In the first two cases, $n$ is connected to an odd number ($m$
in case (1), $(m-1)^2$ in case (2)), so its connected component is
potentially infinite. In the third case, the forward orbit enters a
cycle. We determined the minimum number in each cycle and collated
the cycles discovered for all $n\le10^9$. It turned out that they were
all among the cycles considered in Section~\ref{sec:examples}, so we
could readily classify each connected component as either finite or
potentially infinite.  That left $1053$ numbers in the indeterminate case
(4), to which we applied the algorithm from Section~2 to search for odd
numbers in the inverse orbit of $n$, then of $s(n)$, $s(s(n))$, \ldots,
until reaching a value of $s^k(n)$ in excess of $2^{48}$.  With this method
we succeeded in finding an odd number for all but nine values of $n$, whose
forward orbits merged into just four distinct aliquot sequences. Finally,
we resolved these by continuing the forward orbits with a larger cutoff
of $10^{70}$.

For each $n$ with potentially infinite connected component, we recorded,
as a certificate, an odd number $m\in\N$ and indices $j,k\ge0$ such that
$s^k(n)=s^j(m)$. The interested reader may find these at \cite{data},
along with the data pertaining to the finite connected components.

\section{Related questions}\label{sec:conclusion}
Recall that a number $n\in\N$ is called \emph{non-aliquot} (or
\emph{untouchable}) if $s^{-1}(\{n\})=\emptyset$.
Pollack and Pomerance \cite{pp} have conjectured that the non-aliquot
numbers have asymptotic density
$$
\lim_{y\to\infty}
\frac{\sum_{\substack{a\in2\N\\a\le y}}a^{-1}e^{-a/s(a)}}
{\sum_{\substack{a\in\N\\a\le y}}a^{-1}}
\approx17\%
$$
in the natural numbers, and this is supported by the available numerical
evidence. Their analysis relies heavily on some heuristics for the typical
behavior of $s$ over the natural numbers. The heuristics
do not apply to amicable numbers, which are atypical in this respect
(e.g., for any amicable number $a$, the sequence $a$, $s(a)$,
$s(s(a))$ is not monotonic, which is a rare event among all natural
numbers). Nevertheless, one can ask whether the amicable pairs
that form isolated cycles have a density within the set of all amicable
pairs (ordered by smaller element, say). Empirically almost all aliquot
cycles have order $2$, so this density, if it exists, should agree with
that of the isolated cycles among all cycles. Table~\ref{tab:density}
shows the frequency of isolated cycles among all known cycles in various
ranges up to $10^{14}$.  Based on this limited evidence, we speculate
that the limiting density does exist and is approximately $6\%$.
\begin{table}
\caption{Frequency of isolated cycles and cycles with finite connected
component}\label{tab:density}
\begin{tabular}{rrrr}
    & number of cycles with    & number that  & number with finite \\
$x$ & smallest element $\le x$ & are isolated & connected component \\ \hline
$10^{10}$ & $1462$ & $98$ ($6.70\%$) & $249$ ($17.03\%$)\\
$10^{11}$ & $3385$ & $214$ ($6.32\%$) & $613$ ($18.11\%$)\\
$10^{12}$ & $7692$ & $471$ ($6.12\%$) & $1445$ ($18.79\%$)\\
$10^{13}$ & $17583$ & $1052$ ($5.98\%$) & $3309$ ($18.82\%$)\\
$10^{14}$ & $39457$ & $2397$ ($6.07\%$) & $7448$ ($18.88\%$)
\end{tabular}
\end{table}

Similarly, one might ask whether there are infinitely many finite
connected components, and whether the cycles with finite connected
component have a density among all cycles. Table~\ref{tab:density}
also shows data relevant to these questions. Again we speculate that the
answer to both is yes, with the limiting density approximately $19\%$.

Finally, we found finite connected components of every size $\le41$.
Table~\ref{tab:large} shows the ones of record size when ordered by
smallest element.  It seems plausible that every positive integer is the
cardinality of a finite connected component; in particular, we conjecture
that there are arbitrarily large finite components.
\begin{table}
\caption{Numbers with finite connected component
of record size}\label{tab:large}
\begin{tabular}{rr|rr}
$n$ & size & $n$ & size\\ \hline
$28$ & $1$ &
$7651954416$ & $24$\\
$356408$ & $2$ &
$10238969536$ & $35$\\
$520208$ & $3$ &
$97624271600$ & $36$\\
$954536$ & $4$ &
$757688279778$ & $37$\\
$2652728$ & $5$ &
$944013126176$ & $38$\\
$9478910$ & $8$ &
$1164087362100$ & $41$\\
$15576848$ & $16$ &
$1336635061736$ & $52$\\
$932913124$ & $21$ &
$3459684525732$ & $58$
\end{tabular}
\end{table}
\bibliographystyle{amsplain}
\bibliography{aliquot}
\end{document}